\let\Hold\H
\def\N{{\mathbb{N}}} 
\def\R{{\mathbb{R}}} 
\newtheorem{theorem}{Theorem}[section]
\newtheorem{lemma}[theorem]{Lemma}
\theoremstyle{definition}
\newtheorem{conjecture}[theorem]{Conjecture}
\theoremstyle{remark}
\numberwithin{equation}{section}
\DeclareMathOperator{\disc}{disc}
\DeclareMathOperator{\herdisc}{herdisc}
\newcommand{\junk}[1]{}
\begin{document}
\title[Hereditary Discrepancy of HAPs]{On The Hereditary Discrepancy of Homogeneous Arithmetic  Progressions}
\author{Aleksandar Nikolov \and Kunal Talwar}

\begin{abstract}
  We show that the hereditary discrepancy of homogeneous arithmetic  progressions is lower bounded by $n^{1/O(\log \log n)}$. This bound  is tight up to the constant in the exponent. Our lower bound goes  via proving an exponential lower bound on the discrepancy of set  systems of subcubes of the boolean cube $\{0, 1\}^d$.
\end{abstract}

\maketitle

\section{Introduction}
Circa 1932 Paul Erd{\Hold{o}}s made the following conjecture:
\begin{conjecture}[\cite{erdHos1957some}]\label{conj:EDP}
  For any function $f:\N \rightarrow \{-1, +1\}$ and for any constant
  $C$, there exist positive integers $n$ and $a$ such that
  \begin{equation*}
    |\sum_{i = 1}^{\lfloor n/a \rfloor}{f(ia)}| > C. 
  \end{equation*}
\end{conjecture}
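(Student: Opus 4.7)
The plan is to attack the conjecture via the hereditary discrepancy framework, building on structural features of the set system $\mathcal{H}_n = \{P_{a,n} : a \ge 1\}$, where $P_{a,n} = \{a, 2a, \ldots, \lfloor n/a \rfloor a\}$ is a homogeneous arithmetic progression in $[n]$. The conjecture is equivalent to the assertion that $\disc(\mathcal{H}_n) \to \infty$ as $n \to \infty$, so a natural route is to identify a richly structured subsystem of $\mathcal{H}_n$ on which a discrepancy lower bound is provable, and then transfer the bound back.

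The first concrete step would be to exploit the multiplicative structure of $\N$. For distinct primes $p_1, \ldots, p_d$ with $\prod_i p_i \le n$, the progressions $P_{a,n}$ whose common difference ranges over squarefree products of the $p_i$ realize, together with their intersections, a combinatorial structure isomorphic to the set system of subcubes of $\{0,1\}^d$, since divisibility by distinct primes is essentially independent. The hope is that a strong discrepancy lower bound on the subcube system, exponential in $d$, would translate up to inclusion--exclusion losses into a lower bound on $\disc(\mathcal{H}_n)$; choosing $d \approx \log n / \log\log n$ via the prime number theorem would then yield the desired unboundedness.

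To prove such a subcube lower bound, I would exploit tensor/product structure: boolean cubes compose under direct products, and one expects discrepancy to behave at least sub-multiplicatively under such products, giving a dimension-dependent lower bound via induction or via a spectral certificate such as the $\gamma_2$-norm or the determinant lower bound of Lov\'asz--Spencer--Vesztergombi. Because these tools are known to yield strong bounds for $\herdisc$, the cleanest route is to first prove an exponential lower bound for the hereditary discrepancy of subcubes and hope to upgrade it.

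The principal obstacle, and the reason the conjecture is open, is that discrepancy is \emph{not} monotone under passing to subsystems: a hereditary discrepancy lower bound for $\mathcal{H}_n$ does \emph{not} imply a bound on $\disc(\mathcal{H}_n)$ itself, since a clever $\pm 1$ assignment on all of $[n]$ may exploit global cancellations that are invisible inside any fixed subsystem. Bridging this gap is essentially the heart of EDP. A full proof would likely require analytic-number-theoretic input: either a reduction to completely multiplicative $f$ and mean-value estimates in the spirit of Chowla's conjecture, or Fourier-analytic techniques adapted to the multiplicative structure of $\N$.
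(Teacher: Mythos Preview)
The statement you were asked to prove is Conjecture~\ref{conj:EDP}, and the paper does \emph{not} prove it: it is presented there precisely as an open problem (the Erd\Hold{o}s Discrepancy Problem), with the best lower bound at the time of writing being $3$. What the paper actually proves is Theorem~\ref{thm:mainlb}, a lower bound on the \emph{hereditary} discrepancy of homogeneous arithmetic progressions, and the machinery you outline --- embedding a subcube set system via primes, proving an exponential-in-$d$ bound for subcubes using the determinant lower bound, and taking $d \approx \log n/\log\log n$ --- is exactly the content of that proof.

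You have in fact diagnosed your own gap correctly: a hereditary discrepancy lower bound for $\mathcal{H}_n$ says nothing about $\disc(\mathcal{H}_n)$, and your proposal offers no mechanism to bridge this. The final paragraph gestures at multiplicative functions and Chowla-type input, which is indeed the direction in which the conjecture was eventually resolved (by Tao, after this paper appeared), but none of that is carried out here. So as a proof of the conjecture the proposal is not merely incomplete but, by your own admission, stops at precisely the point where the real difficulty begins; as a proof of the paper's actual theorem, it is essentially a faithful sketch of the paper's argument.
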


This question can be phrased in the language of discrepancy theory as
follows. For a positive integer parameter $n$, we consider the set system of
subsets of $\{1, \ldots, n\}$ given by arithmetic progressions of the
form $(ia)_{i = 1}^{k}$ for all positive integers $a \leq n$ and $k
\leq \lfloor n/a \rfloor$. As is customary, we shall call such
arithmetic progressions \emph{homogeneous}. The \emph{discrepancy} of
a function  $f:\{1, \ldots, n\} \rightarrow \{-1, 1\}$ for this set
system is the maximum
value of $|\sum_{i = 1}^{k}{f(ia)}|$ over all $a$ and $k$ as above. The
discrepancy of the set system of homogeneous arithmetic progressions
over $\{1, \ldots, n\}$ is the minimum achievable discrepancy over all
functions $f:\{1, \ldots, n\} \rightarrow \{-1, 1\}$. In this
language, Conjecture~\ref{conj:EDP} states that the discrepancy of
homogeneous arithmetic progressions is unbounded as $n$ goes to
infinity.

This problem is now known as the Erd\Hold{o}s Discrepancy Problem, and
stands as a major open problem in discrepancy theory and combinatorial
number theory. Relying on a computer-aided proof, Konev and Lisitsa
recently reported~\cite{KonevL14} that the discrepancy of
homogeneous arithmetic progressions over $\{1, \ldots, n\}$ 
is at least $3$ for large enough $n$, and this
remains the best known lower bound (a lower bound
of $2$ for $n\geq 12$ was well known). On the other hand, the function
$f$ which takes value $f(i) = -1$ if and only if the last nonzero
digit of $i$ in ternary representation is $2$ has discrepancy $O(\log
n)$. For references and other partial results related to the
Erd\Hold{o}s Discrepancy Problem, see~\cite{finchrefs, polymath-wiki}.

The Erd\Hold{o}s Discrepancy Problem  recently also received attention as
the subject of the fifth polymath project~\cite{polymath-wiki}. Our
note is motivated by results of Alon and Kalai, announced and
sketched in the weblog post~\cite{kalai-post}. Using the Beck-Fiala
theorem, they showed that even for homogeneous arithmetic progressions
restricted to an \emph{arbitrary subset} of the integers up to $n$,
the discrepancy is no more than $n^{1/\Omega(\log \log n)}$. Also, for
infinitely many $n$, they constructed a set of integers $W_n$ all
bounded by $n$, so that there is a set of homogeneous arithmetic
progressions which, when restricted to $W_n$, form a known high
discrepancy set system (the Hadamard set system). This construction
showed that the minimum discrepancy for homogeneous arithmetic
progressions restricted to $W_n$ is at least $\Omega(\sqrt{\log
  n}/\sqrt{\log \log n})$. Since their discrepancy upper bound only
uses a bound on the number of distinct homogeneous arithmetic
progressions any integer less than $n$ belongs to, it was
reasonable to guess that the lower bound was closer to the truth.

In this note we show that in fact it is the upper bound of Alon and
Kalai which is tight up to the constant in the exponent. Our main
result is given by the following theorem.
\begin{theorem}\label{thm:mainlb}
  For infinitely many positive integers $n$, there exists a set $W_n
  \subseteq \{1, \ldots, n\}$ of square-free integers such that the
  following holds. For any $f:W_n \rightarrow \{-1, +1\}$ there exists a
  positive integer $a$ so that
  \begin{equation*}
    |\sum_{\substack{b\in W_n\\ a|b}}{f(b)}| = n^{1/O(\log \log n)}. 
  \end{equation*}
\end{theorem}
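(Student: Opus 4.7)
The plan is to reduce Theorem~\ref{thm:mainlb} to an exponential-in-$d$ lower bound on the hereditary discrepancy of the subcube set system on $\{0,1\}^d$, and then establish that bound via the factorization-norm ($\gamma_2$) lower bound on hereditary discrepancy.

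\textbf{Reduction.} Choose $d = \Theta(\log n / \log \log n)$, and let $p_1, q_1, \ldots, p_d, q_d$ be $2d$ primes with $\prod_i p_i q_i \leq n$; these exist by the prime number theorem, since $\sum_{i\le 2d}\log p_i = O(d \log d) = O(\log n)$. Encode each $v \in \{0,1\}^d$ as the square-free integer $b_v = \prod_{i:\,v_i = 0} p_i \cdot \prod_{i:\,v_i = 1} q_i$. A square-free $a$ whose prime factors lie among $\{p_i, q_i\}$, with at most one of $\{p_i, q_i\}$ per coordinate, determines a subcube $c(a) \subseteq \{0,1\}^d$ (coordinate $i$ fixed to $0$ if $p_i \mid a$, to $1$ if $q_i \mid a$, free otherwise), and $a \mid b_v$ iff $v \in c(a)$; all other $a$ give an empty HAP in $W_n$. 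Consequently, for any $V \subseteq \{0,1\}^d$ and $W_n := \{b_v : v \in V\}$, the HAP set system on $W_n$ is isomorphic to the set system of subcubes of $\{0,1\}^d$ restricted to $V$.

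\textbf{Subcube hereditary discrepancy.} The technical heart of the proof is the claim that the $3^d \times 2^d$ incidence matrix $M_d$ of all subcubes of $\{0,1\}^d$ has hereditary discrepancy at least $2^{\Omega(d)}$. The matrix $M_d$ has the tensor-power structure $M_d = M_1^{\otimes d}$ with $M_1 = \bigl(\begin{smallmatrix}1 & 1 \\ 1 & 0 \\ 0 & 1\end{smallmatrix}\bigr)$. I would apply the factorization-norm lower bound $\herdisc(A) = \Omega(\gamma_2(A)/\log|A|)$, where $\gamma_2(A)$ is the minimum of $\max_i \|u_i\| \cdot \max_j \|v_j\|$ over Hilbert-space representations $\langle u_i, v_j\rangle = A_{ij}$. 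Two facts about $\gamma_2$ are used: (i) $\gamma_2$ is multiplicative under tensor products (upper bound by tensoring representations; matching lower bound by tensoring dual SDP witnesses), so $\gamma_2(M_d) = \gamma_2(M_1)^d$; and (ii) $\gamma_2(M_1) > 1$---indeed, any representation with $\max_i\|u_i\|,\max_j\|v_j\| \leq 1$ would force $u_1 = v_1 = v_2 = u_2$ (by the Cauchy--Schwarz equality condition applied to the three $1$-entries in row~$1$ and entry $(2,1)$), contradicting $(M_1)_{2,2} = 0$. Writing $\kappa := \gamma_2(M_1) > 1$, we obtain $\gamma_2(M_d) = \kappa^d$ and hence $\herdisc(M_d) \geq \kappa^d / O(d) = 2^{\Omega(d)}$.

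\textbf{Putting it together, and the main obstacle.} By the hereditary-discrepancy bound, there exists a set $V \subseteq \{0,1\}^d$ on which the restricted subcube system has discrepancy at least $2^{\Omega(d)}$. Taking $W_n = \{b_v : v \in V\}$ and applying the reduction gives HAP discrepancy on $W_n$ of at least $2^{\Omega(d)} = n^{1/O(\log \log n)}$. The technically most delicate step is the super-multiplicative direction of the tensor identity for $\gamma_2$, which requires its SDP dual characterization and careful tensoring of dual witnesses. A purely determinant-based lower bound on hereditary discrepancy does not suffice here, since every $2 \times 2$ submatrix of $M_1$ has determinant $\pm 1$, so tensor products yield only unit determinants.
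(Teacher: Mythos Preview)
Your reduction from homogeneous arithmetic progressions to subcubes of $\{0,1\}^d$ is exactly the paper's, down to the encoding by products of one prime per coordinate-pair and the asymptotic $d=\Theta(\log n/\log\log n)$.

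Where you diverge is in the lower bound for $\herdisc(\mathcal{S}^d)$. You exploit the tensor structure $M_d=M_1^{\otimes d}$ together with the multiplicativity of $\gamma_2$ and the bound $\herdisc(A)=\Omega(\gamma_2(A)/\log|A|)$; your Cauchy--Schwarz equality argument that $\gamma_2(M_1)>1$ is correct, so this yields $\herdisc(M_d)=2^{\Omega(d)}$ as needed. The paper instead uses only the classical determinant lower bound of Lov\'asz--Spencer--Vesztergombi: it passes to the matrix $G_d$ of weight-$d/8$ characters of $\mathbb{F}_2^d$, applies Cauchy--Binet to $\det(G_dG_d^T)=(2^d)^{\binom{d}{d/8}}$ to extract a square submatrix with $|\det|^{1/M}\ge 2^{3d/16}/e$, and then observes that each such character is a $\pm1$ combination of $2^{d/8}$ subcube indicators, giving $\herdisc(G_d)\le 2^{d/8}\herdisc(\mathcal{S}^d)$ and hence $\herdisc(\mathcal{S}^d)=\Omega(2^{d/16})$. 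Your route is cleaner conceptually and leans directly on the tensor structure, at the cost of invoking the $\gamma_2$/hereditary-discrepancy relationship (and the super-multiplicativity of $\gamma_2$, which you rightly flag as the delicate step) as black boxes; the paper's route is more elementary in its prerequisites and yields an explicit constant in the exponent.

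One correction to your closing remark: it is not true that a determinant-based lower bound ``does not suffice here.'' You are right that tensoring $2\times 2$ minors of $M_1$ gives only unit determinants, so the \emph{naive} tensor argument fails. But the paper shows precisely how to make the determinant bound work, by passing through the auxiliary character matrix $G_d$ rather than attacking $M_d$ directly. So the determinant route is viable; it just requires the extra Fourier-analytic step.
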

Our construction of the sets $W_n$ is inspired by the construction of
Alon and Kalai. Instead of the Hadamard set system, we embed a set
system of subcubes of the boolean cube inside the set of homogeneous
arithmetic progressions. Such systems of boolean subcubes were
previously considered in computer science in the context of private
data analysis~\cite{shiva2010,rudelson2011row}, and by Chazelle and
Lvov~\cite{ChazelleL01-subcubes} as a tool to prove a polynomial lower
bound on the discrepancy of axis-aligned boxes in high dimension.  We
give a new simpler proof of an improved lower bound on the discrepancy
of boolean subcubes, using elementary Fourier analysis and the
determinant lower bound on hereditary discrepancy due to Lov\'{a}sz,
Spencer, and Vesztergombi~\cite{lovasz1986discrepancy}.

Our construction produces sets $W_n$ of square free integers with a
large number of prime divisors, suggesting that such integers are a
chief obstacle in achieving bounded discrepancy for homogeneous
arithmetic progressions.

\section{Preliminaries} 
For a positive integer $n$, let $[n]$ be
the set $\{1, \ldots, n\}$. Given a set $S$, let ${S \choose k}$ be
the set of cardinality $k$ subsets of $S$. The expression $\langle
\cdot, \cdot \rangle_2$ denotes the standard inner product over the
vector space $\mathbb{F}_2^d$. We identify elements of
$\mathbb{F}_2^d$ with the boolean cube $\{0, 1\}^d$ in the natural
way. We use $|v|$ for the Hamming weight of a vector $v$, i.e. $|v| =
|\{i: v_i = 1\}|$.

A \emph{set system} is defined as a pair
$(\mathcal{S}, U)$, where $\mathcal{S} = \{S_1, \ldots, S_m\}$ and
$\forall j \in [m]: S_j \subseteq U$. The \emph{restriction}
$(\mathcal{S}|_W, W)$ of a set system $(\mathcal{S}, U)$ to some $W
\subseteq U$ is defined by $\mathcal{S}|_W = \{S_1 \cap W, \ldots,
S_m \cap W\}$.

The discrepancy and hereditary discrepancy of a
set system $\mathcal{S}$ are defined as 
\begin{align*}
  \disc(\mathcal{S}) &= \min_{f:U \rightarrow \{-1, +1\}} \max_{j
    \in [m]}{| \sum_{i \in S_j}{f(i)}|} \\
  \herdisc(\mathcal{S}) &= \max_{W \subseteq U} \disc(\mathcal{S}|_W)
\end{align*}

The definitions of discrepancy and hereditary discrepancy can be
extended to matrices $A \in \R^{m \times n}$ in a natural
way. Analogously to the definition of a restriction of a set system,
we define a restriction $A|_W$ of $A \in \R^{m \times n}$ for $W
\subseteq [n]$ as the submatrix of columns of $A$ indexed by elements
of $S$. Then discrepancy and hereditary discrepancy for matrices are
defined as
\begin{align*}
  \disc(A) &= \min_{x \in \{-1, +1 \}^n}{\|Ax\|_\infty}\\
  \herdisc(A) &= \max_{W \subseteq [n]}{\disc(A|_W)}
\end{align*}

We will need the determinant lower bound on hereditary discrepancy,
due to Lov\'{a}sz, Spencer, and Vesztergombi.

\begin{theorem}[\cite{lovasz1986discrepancy}]\label{thm:detlb}
  For any real $m \times n$ matrix ${A}$,
  \begin{equation*}
    \herdisc(A) \geq \frac{1}{2} \max_k \max_B |\det(B)|^{1/k}
  \end{equation*}
  where, for any $k$, $B$ ranges over all $k \times k$ submatrices of
  $A$. 
\end{theorem}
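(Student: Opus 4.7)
The theorem is a classical bound of Lov\'{a}sz, Spencer, and Vesztergombi; my plan sketches one approach that recovers it, proving the equivalent statement that $|\det(B)| \leq (2d)^k$ for every $k \times k$ submatrix $B$, where $d := \herdisc(A)$. Taking $k$-th roots and maximizing over $B$ and $k$ then yields the theorem.

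The strategy is a lattice covering argument. Assume $B$ is nonsingular (otherwise the bound is trivial), and set $\Lambda = B\Z^k$, a lattice in $\R^k$ of covolume $|\det(B)|$. It suffices to show that $\R^k = \Lambda + [-d, d]^k$: the quotient map then sends $[-d, d]^k$ surjectively onto $\R^k/\Lambda$, and comparing volumes gives $(2d)^k \geq |\det(B)|$. Equivalently, for every $v \in \R^k$ I seek $z \in \Z^k$ with $\|v - Bz\|_\infty \leq d$; after translating modulo $\Lambda$ I may assume $v = Bu$ for some $u \in [0,1)^k$, so the task reduces to rounding $u$ to an integer $z$ close to $u$ in the $B$-seminorm.

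The rounding is powered by a halving primitive extracted from hereditary discrepancy. For every subset $S$ of the column set $W$ of $B$, hereditary discrepancy supplies a signing $\sigma_S \in \{-1, +1\}^S$ with $\|B\tilde\sigma_S\|_\infty \leq d$, where $\tilde\sigma_S$ is the zero-extension of $\sigma_S$ to $W$. Setting $y_S = \tfrac{1}{2}(\tilde\sigma_S + \mathbf{1}_S) \in \{0,1\}^W$ yields $\|B y_S - \tfrac{1}{2} B\mathbf{1}_S\|_\infty \leq d/2$, so the map $\pi: v \mapsto y_{\mathrm{supp}(v)}$ on $\{0,1\}^W$ approximately halves $Bv$ with $\ell_\infty$ error at most $d/2$. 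To round $u$, expand it in binary, $u = \sum_{j\geq 1} b_j/2^j$ with $b_j \in \{0,1\}^W$, and iteratively apply $\pi$ to the digits, absorbing the errors geometrically to obtain a single integer $z \in \Z^k$ with $\|Bu - Bz\|_\infty \leq d$.

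The main obstacle, as I see it, lies in making the cascade in the final step converge and terminate: the binary expansion of $u$ is generally infinite, and naive iteration generates infinitely many correction terms whose errors would add rather than shrink. The intended remedy is to exploit the finiteness of $\{0,1\}^W$, using that iterating $\pi$ from any start must enter a cycle in at most $2^{|W|}$ steps, and that any cycle vertex $y = \pi^T(y)$ satisfies $\|By\|_\infty \leq d$ (by propagating the halving relation around the cycle and letting the traversal count go to infinity). With this finite termination, the residual tail of the binary expansion is absorbed into the covering cube, and the accumulated integer contributions assemble routinely into the desired $z$.
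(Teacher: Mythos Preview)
The paper does not prove this theorem; it is quoted in the Preliminaries from Lov\'asz, Spencer, and Vesztergombi as a black-box tool and is never argued in the text, so there is no in-paper proof to compare against.

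That said, your outline is essentially the original Lov\'asz--Spencer--Vesztergombi argument: bound linear discrepancy by hereditary discrepancy via an iterated halving, then deduce the covering $\R^k = B\Z^k + [-d,d]^k$, whence $(2d)^k \ge |\det B|$. One point deserves tightening. Your map $\pi$ acts only on $\{0,1\}^W$, but the cascade that makes the errors sum geometrically must track the full dyadic vector, not just its parity; applying $\pi$ separately to each binary digit $b_j$ gives per-digit error close to $d$, which does not decay. The clean formulation is: for $u = p/2^N$ with $p \in \{0,\ldots,2^N\}^k$, let $S$ be the set of odd coordinates of $p$, pick $\sigma \in \{\pm 1\}^S$ with $\|(B|_S)\sigma\|_\infty \le d$, and replace $p$ by the even vector $p+\tilde\sigma$; one pass lowers $N$ by $1$ at $\ell_\infty$-cost $d/2^N$, so after $N$ steps you reach some $z \in \{0,1\}^k$ with $\|B(u-z)\|_\infty \le d\sum_{j=1}^N 2^{-j} < d$. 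This already terminates for dyadic $u$, and the general case follows because $B\Z^k + [-d,d]^k$ is closed (Minkowski sum of a discrete set and a compact set) and contains a dense subset of $\R^k$. Your cycle observation about $\pi$ is correct as stated---in fact supports are non-increasing under $\pi$, so every orbit reaches a fixed point $y$ within $k$ steps, and then $\|By\|_\infty \le d$---but it addresses termination rather than how the digits combine, and becomes unnecessary once the cascade is run on the right state space.
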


\section{Proof of the Main Theorem}

Theorem~\ref{thm:mainlb} is a consequence of a lower bound on the
hereditary discrepancy of the set system of subcubes of the boolean
cube. Next we define this set system formally. For a positive integer
$d$, we define the set system $(\mathcal{S}^d, \{0, 1\}^d)$, where
$\mathcal{S}^d = \{S_v\}_{v \in \{0, 1, *\}^d}$ is defined by
\begin{equation*}
  S_v = \{u \in \{0, 1\}^d: v_i \neq * \Rightarrow u_i = v_i\}. 
\end{equation*}

Similar set systems were studied in computer science in relation to
computing conjunction queries on a binary database under the
constraint of differential
privacy~\cite{shiva2010,rudelson2011row}. The system $\mathcal{S}^d$
was also considered by Chazelle and Lvov in their study of the
discrepancy of high-dimensional axis-aligned
boxes~\cite{ChazelleL01-subcubes}. They used the trace
bound~\cite{ChazelleL01-tracebound} to prove that
$\herdisc(\mathcal{S}^d) = \Omega(2^{cd})$ where $c$ is a constant
approximately equal to $c \approx 0.0477$. Here we slightly improve
the constant in the exponent, and give a simpler proof using elementary Fourier
analysis and the determinant lower bound.  

\begin{lemma}\label{lm:bcube-lb}
  For all positive integers $d$, $\herdisc(\mathcal{S}^d) =
  \Omega(2^{d/16})$. 
\end{lemma}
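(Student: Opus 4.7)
The plan is to apply the Lov\'asz--Spencer--Vesztergombi determinant lower bound (Theorem~\ref{thm:detlb}) to a carefully chosen square submatrix of the incidence matrix $A$ of $\mathcal{S}^d$, the submatrix being selected via Fourier analysis on $\mathbb{F}_2^d$. Fix a parameter $\ell$, to be optimized later. For each subset $S \subseteq [d]$ with $|S| \leq \ell$, let $v^S$ denote the subcube with $v^S_i = 1$ for $i \in S$ and $v^S_i = *$ otherwise, so that $\mathbf{1}_{S_{v^S}}(x) = \prod_{i \in S} x_i$. Let $V = \{v^S : |S| \leq \ell\}$; its cardinality is $N := \sum_{k \leq \ell} \binom{d}{k}$. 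I will exhibit a set $W \subseteq \{0,1\}^d$ with $|W| = N$ such that the submatrix $B := A|_{V, W}$ satisfies $|\det B|^{1/N} = \Omega(2^{d/16})$.

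The Fourier identity
\begin{equation*}
  \chi_u(x) = (-1)^{\langle u, x\rangle_2} = \prod_{i : u_i = 1}(1 - 2x_i) = \sum_{S \subseteq \mathrm{supp}(u)} (-2)^{|S|}\,\mathbf{1}_{S_{v^S}}(x)
\end{equation*}
writes each parity character as an integer combination of $v^S$-indicators, where $\mathrm{supp}(u) = \{i : u_i = 1\}$. Collecting these identities for all $u$ with $|u| \leq \ell$ and identifying $u$ with its support yields a factorization $H_U = C \cdot A|_V$, where $H_U$ is the row-restriction of the $2^d \times 2^d$ Hadamard matrix (with entries $\chi_u(x)$) to characters of weight at most $\ell$, and $C$ is the $N \times N$ matrix $C_{u, S} = (-2)^{|S|}\,\mathbf{1}[S \subseteq u]$. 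Since $C$ is upper-triangular in the inclusion order with diagonal entries $(-2)^{|u|}$, we have $|\det C| = 2^K$ where $K = \sum_{|u| \leq \ell} |u|$.

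Restricting to columns in any $W$ of size $N$ gives $H_U|_W = C \cdot B$, so $|\det B| = |\det H_U|_W|/2^K$. To lower bound $|\det H_U|_W|$ for some $W$, I use Cauchy--Binet together with the character orthogonality $H_U H_U^{\top} = 2^d I_N$: $\sum_{|W|=N}(\det H_U|_W)^2 = \det(H_U H_U^{\top}) = 2^{dN}$. Averaging over $W$ and using $\binom{2^d}{N} \leq (e \cdot 2^d/N)^N$ yields some $W$ with $|\det H_U|_W| \geq (N/e)^{N/2}$, hence $|\det B|^{1/N} \geq \sqrt{N/e}\cdot 2^{-K/N}$, and Theorem~\ref{thm:detlb} gives
\begin{equation*}
  \herdisc(\mathcal{S}^d) \geq \tfrac{1}{2}\sqrt{N/e}\cdot 2^{-K/N}.
\end{equation*}

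Finally, set $\ell = \alpha d$ for a constant $\alpha \in (0, 1/2)$. Stirling gives $\log_2 N \geq H(\alpha)d - O(\log d)$, and a direct computation using $k\binom{d}{k} = d\binom{d-1}{k-1}$ yields $K/N \leq \alpha d + O(1)$. The bound becomes $\Omega(2^{(H(\alpha)/2 - \alpha)d})$, and one checks that the window of $\alpha$ for which $H(\alpha)/2 - \alpha \geq 1/16$ is nonempty (for instance $\alpha = 1/4$ gives $H(\alpha)/2 - \alpha \approx 0.155 > 1/16$), so the stated bound follows. The main technical step will be the Cauchy--Binet existence of a suitable $W$; the Fourier identity is routine, and the entropy-based optimization to reach the constant $1/16$ is straightforward.
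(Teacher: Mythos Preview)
Your argument is correct and shares the paper's core ingredients---orthogonality of Walsh characters, Cauchy--Binet averaging to locate a good column set $W$, and the Lov\'asz--Spencer--Vesztergombi determinant bound---but organizes them differently. The paper first applies the determinant bound to the character matrix $G_d$ (rows of weight exactly $d/8$), obtaining $\herdisc(G_d)\geq 2^{3d/16}/(2e)$, and then separately proves the discrepancy comparison $\herdisc(G_d)\leq 2^{d/8}\,\herdisc(\mathcal{S}^d)$ by writing each $\chi_v$ as a $\pm 1$ combination of the $2^{d/8}$ subcube indicators that partition $\{0,1\}^d$ along the support of $v$. You instead use the M\"obius-type expansion $\chi_u=\sum_{S\subseteq\mathrm{supp}(u)}(-2)^{|S|}\prod_{i\in S}x_i$, which produces a triangular change-of-basis matrix $C$ and lets you apply the determinant bound directly to a submatrix of the incidence matrix of $\mathcal{S}^d$. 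Your route uses only the ``monotone'' subcubes (all non-$*$ coordinates equal to $1$), and with $\ell=d/4$ it in fact yields a larger exponent, $H(1/4)/2-1/4\approx 0.155$, than the paper's $1/16$; the paper's two-step decomposition, on the other hand, isolates the cleaner inequality $\herdisc(G_d)\leq 2^{d/8}\,\herdisc(\mathcal{S}^d)$, which is a statement about hereditary discrepancy itself and is not tied to the determinant method.
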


In the remainder of this section we prove that Lemma~\ref{lm:bcube-lb}
implies Theorem~\ref{thm:mainlb}. We prove Lemma~\ref{lm:bcube-lb} in
the subsequent section. 

\begin{proof}[Proof of Theorem~\ref{thm:mainlb}]
  For each positive integer $d$, we will construct a set of integers
  $B_d$ such that the hereditary discrepancy of homogeneous arithmetic
  progressions restricted to $B_d$ is lower bounded by the hereditary
  discrepancy of $\mathcal{S}^d$. Then Theorem~\ref{thm:mainlb} will
  follow from Lemma~\ref{lm:bcube-lb}.

  Let $p_{1,0} < p_{1,1} < \ldots < p_{d,0} < p_{d,1}$ be the first $2d$
  primes. We define $B_d$ to be the following set of square free integers 
  \begin{equation*}
    B_d = \{\prod_{i = 1}^d{p_{i, u_i}}: u \in \{0, 1\}^d\}.
  \end{equation*}
  In other words, $B_d$ is the set of all integers that are divisible by
  exactly one prime $p_{i, b}$ from each pair $(p_{i,0}, p_{i, 1})$ and
  no other primes. By the prime number theorem,  the largest of these
  primes satisfies $p_{d, 1} = \Theta(d
  \log d)$. Let $n = n(d)$ be the largest integer in $B_d$. The crude
  bound $n(d) = 2^{O(d \log d)}$ will suffice for our purposes. Notice
  that $d = \Omega(\log n/\log \log n)$.

  There is a natural one to one correspondence between the set $B_d$ and
  the set $\{0, 1\}^d$: to each $u \in \{0, 1\}^d$ we associate the
  integer $b_u = \prod_{i = 1}^d{p_{i, u_i}}$. By this correspondence,
  we can think of any assignment $f:\{0, 1\}^d \rightarrow \{-1, +1\}$
  as an assignment $f:B_d \rightarrow \{-1, +1\}$. We also claim that
  each set in the set system $\mathcal{S}^d$ corresponds to a homogeneous
  arithmetic progression restricted to $B_d$. With any $S_v \in
  \mathcal{S}^d$ (where $v \in \{0, 1, *\}^d$) associate the integer
  $a_v = \prod_{i: v_i \neq *}{p_{i, v_i}}$. Observe that for any $b_u
  \in B_d$, $a_v$ divides $b_u$ if and only if $u \in S_v$. We have the
  following implication for any assignment $f$, any $U \subseteq \{0,
  1\}^d$, and the corresponding $W = \{b_u: u\in U\}$:
  \begin{equation}
    \label{eq:bcube-hap}
    \exists S_v: |\sum_{u \in S_v \cap U}{f(u)}| \geq D \;\;\;
    \Leftrightarrow \;\;\;
    \exists a \in \N: |\sum_{\substack{b \in W\\ a | b}}{f(b)}| \geq D.
  \end{equation}
  Notice again that we treat $f$ as an assignment both to elements of
  $\{0, 1\}^d$ and to integers in $B_d$ by the correspondence $u
  \leftrightarrow b_u$. Lemma~\ref{lm:bcube-lb} guarantees the
  existence of some $U$ such that the left hand side of
  \eqref{eq:bcube-hap} is satisfied with $D = 2^{\Omega(d)} =
  n^{1/O(\log \log n)}$ for any $f$. Theorem~\ref{thm:mainlb} follows
  from the right hand side of \eqref{eq:bcube-hap}.
\end{proof}

\section{Lower Bounding the Discrepancy of $\mathcal{S}^d$}

It is convenient to first prove an easier lower bound on the
hereditary discrepancy of low-weight characters of
$\mathbb{F}_2^d$. Then we show that
an exponential (in $d$) lower bound on the discrepancy of characters
of weight $d/8$ implies an exponential lower bound on
$\mathcal{S}^d$. This approach is inspired by the noise lower bounds
on differential privacy  in~\cite{shiva2010}\junk{; however, discrepancy
presents distinct challenges, and the proofs in~\cite{shiva2010}
cannot be directly transported to our setting}.

As usual, for $v \in \mathbb{F}^d$ we define the character $\chi_v$ by
\begin{equation*}
  \forall u \in \{0, 1\}^d: \chi_v(u) = (-1)^{\langle v, u \rangle_2}. 
\end{equation*}
We refer to the Hamming weight $|v|$ of $v$ (taken as a binary vector)
as the \emph{weight} of the character $\chi_v$.  The matrix of the
Walsh-Hadamard transform is defined as $H_d = (\chi_v)_{v \in \{0,
  1\}^d}$, where each $\chi_v$ is written as a row vector of dimension
$2^d$. Notice that for any $v \neq w$, $\sum_{u \in \{0,
  1\}^d}{\chi_v(u)\chi_w(u)} = 0$, i.e.~$H_d$ is an orthogonal matrix;
each row of $H_d$ has squared Euclidean norm $\sum_{u \in \{0,
  1\}^d}{\chi_v(u)^2} = 2^d$.

We will be interested in a submatrix of $H_d$. For the remainder of
this note we assume that $d$ is divisible by 8; this is purely for
notational convenience: our arguments can easily be adapted to the
case when $d$ is not divisible by 8. Let $G_d = (\chi_v)_{v: |v| =
  d/8}$.  Notice that $G_dG_d^T = 2^d
I_{M}$ where $M= {d \choose d/8}$ and $I_M$ is the $M$-dimensional
identity matrix. Therefore,
\begin{equation}
  \label{eq:det-G}
  \det(G_dG_d^T) = (2^d)^{{d \choose d/8}}
\end{equation}

Given \eqref{eq:det-G} and using the determinant lower bound, we can derive a lower bound on the hereditary
discrepancy of $G_d$.

\begin{lemma} \label{lm:G-lb}
  For positive integers $d$,
  \begin{equation*}
    \herdisc(G_d) \geq \frac{2^{3d/16}}{2e}
  \end{equation*}
\end{lemma}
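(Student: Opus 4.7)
The plan is to apply the determinant lower bound (Theorem~\ref{thm:detlb}) to a carefully chosen $M \times M$ submatrix of $G_d$, where $M = \binom{d}{d/8}$. The key observation is that~\eqref{eq:det-G} controls the \emph{product} of squared singular values of $G_d$, and Cauchy–Binet lets us pass from this information about $G_d G_d^T$ to the existence of an individual $M \times M$ submatrix with large determinant in absolute value.

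First I would invoke the Cauchy–Binet formula to write
\begin{equation*}
  \det(G_d G_d^T) \;=\; \sum_{T \in \binom{[2^d]}{M}} \det(G_d|_T)^2,
\end{equation*}
where $G_d|_T$ denotes the $M \times M$ submatrix of $G_d$ with column set $T$. Combining this with~\eqref{eq:det-G} and averaging over the $\binom{2^d}{M}$ choices of $T$, I would conclude that some column set $T^*$ satisfies
\begin{equation*}
  |\det(G_d|_{T^*})|^{2} \;\geq\; \frac{2^{dM}}{\binom{2^d}{M}}.
\end{equation*}

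Next I would take the $(2M)$-th root and use the standard bound $\binom{2^d}{M} \leq (e \cdot 2^d / M)^M$ to simplify:
\begin{equation*}
  |\det(G_d|_{T^*})|^{1/M} \;\geq\; \frac{2^{d/2}}{\bigl(e \cdot 2^d / M\bigr)^{1/2}} \;=\; \sqrt{M/e}.
\end{equation*}
Finally, the elementary estimate $M = \binom{d}{d/8} \geq (d/(d/8))^{d/8} = 2^{3d/8}$ yields $\sqrt{M/e} \geq 2^{3d/16}/\sqrt{e}$. Applying Theorem~\ref{thm:detlb} with the submatrix $B = G_d|_{T^*}$ of order $k = M$ produces
\begin{equation*}
  \herdisc(G_d) \;\geq\; \tfrac{1}{2}\,|\det(G_d|_{T^*})|^{1/M} \;\geq\; \frac{2^{3d/16}}{2\sqrt{e}} \;\geq\; \frac{2^{3d/16}}{2e},
\end{equation*}
which is exactly the claim.

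There is no real obstacle here: the argument is essentially a combinatorial averaging over column subsets followed by a routine binomial estimate. The only thing to be careful about is choosing the submatrix size equal to $M = \binom{d}{d/8}$ (the number of rows of $G_d$); any other choice loses the clean Cauchy–Binet identity and weakens the bound.
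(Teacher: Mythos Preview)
Your argument is correct and matches the paper's proof essentially step for step: Cauchy--Binet applied to $G_dG_d^T$, averaging over column subsets of size $M=\binom{d}{d/8}$, the bound $\binom{2^d}{M}\le (e\,2^d/M)^M$, and then $M\ge 2^{3d/8}$ followed by Theorem~\ref{thm:detlb}. You even record the slightly sharper constant $1/(2\sqrt{e})$ before relaxing it to the stated $1/(2e)$.
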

\begin{proof}
  Let $N= 2^d$ and let $M = {d \choose d/8}$. By \eqref{eq:det-G} and the
  Binet-Cauchy formula for the determinant, we have
  \begin{equation*}
    N^{M} = \det(G_dG_d^T) = \sum_{W \in {[N] \choose M}}{\det(G_d|_W)^2}
  \end{equation*}
  By averaging, there exists a set $W \in {[N] \choose M}$ so that 
  \begin{equation}
    \label{eq:det-G-restr}
    |\det(G_d|_W)|^{1/M} \geq \sqrt{N}{N \choose M}^{-1/2M} \geq
    \sqrt{\frac{M}{e}} 
  \end{equation}
  For the second inequality above we used the bound ${N \choose M}
  \leq (Ne/M)^M$. Plugging in the lower bound $M = {d\choose d/8} \geq 
  2^{3d/8}$ in \eqref{eq:det-G-restr}, we have $|\det(G_d|_W)|^{1/M}
  \geq 2^{3d/16}e^{-1}$. The proof is completed by an application of
  Theorem~\ref{thm:detlb}.
\end{proof}

We are now ready to prove Lemma~\ref{lm:bcube-lb} by exhibiting a
connection between the discrepancy of $G_d$ and the discrepancy of
$\mathcal{S}^d$.

\begin{proof}[Proof of Lemma~\ref{lm:bcube-lb}]
  By Lemma~\ref{lm:G-lb}, it is enough to prove the following inequality:
  \begin{equation}
    \label{eq:G-vs-bcube}
    \herdisc(G_d) \leq 2^{d/8}\herdisc(\mathcal{S}^d)
  \end{equation}
  The key observation is that when $|v| = d/8$, we can express the
  character $\chi_v$ as a linear combination of the indicator
  functions of $2^{d/8}$ sets in $\mathcal{S}^d$. Moreover, the
  coefficients of the linear combination are $\pm 1$. Next we make
  this observation precise.

  Let $v$ be an arbitrary fixed element of $\mathbb{F}^d$ such that $|v|=
  d/8$, and let $1 \leq i_1 < i_2 < \ldots < i_{d/8} \leq d$ denote
  the coordinates $i$ such that $v_i = 1$. Given $w \in
  \{0, 1\}^{d/8}$, let its \emph{extension} $v(w) = \{0, 1, *\}^d$ be
  defined by 
  \begin{equation*}
    v(w)_i = 
    \begin{cases}
      w_\ell & \text{ if } i = i_\ell \text{ for some } \ell \in [d/8];\\
      * & \text{otherwise.}
    \end{cases}
  \end{equation*}
  
  We use the notation $\mathbf{1}_{v(w)}$ for the indicator function
  of the set $S_{v(w)}$. Let $r_{v(w)}$ be a representative from the
  set $S_{v(w)}$, say one obtained by replacing every $*$ in $v(w)$
  with $0$.  Taking $r_{v(w)}$ and the elements of $S_{v(w)}$ as
  elements of $\mathbb{F}_2^d$ in the standard way, for each $z \in
  S_{v(w)}$, $\langle v, z\rangle_2 = \langle v, r_{v(w)}\rangle_2$,
  since only the coordinates $z_{i_1}, \ldots, z_{i_{d/8}}$ affect the
  inner product. Thus, we can express $\chi_v(u)$ as the linear
  combination of these indicator functions:
  \begin{equation}
    \label{eq:char-bcube}
    \forall u \in \{0, 1\}^d: \chi_v(u) = \sum_{w \in \{0, 1\}^{d/8}}{(-1)^{\langle v, r_{v(w)}\rangle_2}\mathbf{1}_{v(w)}(u)}.
  \end{equation}

  For any set $U \subseteq \{0, 1\}^d$ and any $f: U\rightarrow \{-1,
  1\}$, we use \eqref{eq:char-bcube} to write the linear
  transformation $(G_d|_U)f$ in terms of discrepancy values of sets in
  $\mathcal{S}^d$ restricted to the set $U$:
  \begin{align}
    \sum_{u \in U}{\chi_v(u)f(u)} &= \sum_{u \in U}{\left(\sum_{w \in
          \{0, 1\}^{d/8}}{(-1)^{\langle v, r_{v(w)}
            \rangle_2}\mathbf{1}_{v(w)}(u)} \right)f(u)}\notag\\
    &= \sum_{w \in \{0, 1\}^{d/8}}{(-1)^{\langle v, r_{v(w)}  \rangle_2}\left(\sum_{u \in S_{v(w)} \cap U}{f(u)}\right)}. \label{eq:disc-char-bcube}
  \end{align}
  Let $f$ be the function that achieves
  $\disc(\mathcal{S}^d|_U)$. Each of the $2^{d/8}$ terms on the right
  hand side of \eqref{eq:disc-char-bcube} is then bounded in absolute
  value by $\disc(\mathcal{S}^d|_U)\leq
  \herdisc(\mathcal{S}^d)$. Since the choice of $U$ and $v$ was
  arbitrary, this proves \eqref{eq:G-vs-bcube}, and the lemma follows.\end{proof}

\section{Conclusion}

We presented a tight (up to the constant in the exponent) lower bound
on the hereditary discrepancy of homogeneous arithmetic
progressions. Our lower bound instances are given by a set of integers
in  $\{1,\ldots, n\}$ with a large number $\Theta(\log n/{\log \log
  n})$ of distinct prime factors. This suggests that integers with
many distinct factors are the main obstacle to achieving bounded
discrepancy for homogeneous arithmetic progressions.

Our discrepancy lower bound follows from a lower bound on the
discrepancy of a set system of subcubes of the boolean cube. Such set
systems have applications in the theory of differential privacy. The
ideas used in the proof of Lemma~\ref{lm:bcube-lb}, together with the
connection between discrepancy and differential privacy formalized
in~\cite{MNstoc} can be used to give simpler proofs of noise lower
bounds of the type considered in~\cite{shiva2010}. It is an
interesting question whether discrepancy bounds on set systems of
boolean subcubes can find other applications in combinatorics and
computer science. We leave open the question of characterizing the
exact discrepancy of such set systems.

\bibliographystyle{amsplain}
\bibliography{privacygeom}
\end{document}